\title[Conjugacy and homogeneous structures]{The conjugacy problem for automorphism groups of countable homogeneous structures}
\author{Samuel Coskey} \address{Samuel Coskey, Department of Mathematics, Boise State University, 1910 University Drive, Boise, ID, 83725}
\email{scoskey@nylogic.org}
\urladdr{boolesrings.org/scoskey}
\author{Paul Ellis} \address{Paul Ellis, Department of Mathematics and Computer Science, Manhattanville College, 2900 Purchase Street, Purchase, NY, 10577}
\email{paulellis@paulellis.org}
\urladdr{paullellis.org}
\newcommand{\NN}{\mathbb N}
\newcommand{\QQ}{\mathbb Q}
\newcommand{\ZZ}{\mathbb Z}
\newcommand{\RR}{\mathbb R}
\newcommand{\Aut}{\mathop{\mathrm{Aut}}}
\newcommand{\im}{\mathop{\mathrm{im}}}
\newcommand{\set}[1]{\left\{\,#1\,\right\}}
\newcommand{\sset}{\mathsf{set}}
\DeclareMathOperator{\Mod}{Mod}
\makeatletter\pretocmd{\@seccntformat}{\S}{}{}\pretocmd{\@subseccntformat}{\S}{}{}\makeatother
\newtheorem{thm}{Theorem}[section]
\newtheorem{lem}[thm]{Lemma}
\begin{document}
\maketitle

\begin{abstract}
  We consider the conjugacy problem for the automorphism groups of a number of countable homogeneous structures. In each case we find the precise complexity of the conjugacy relation in the sense of Borel reducibility.
\end{abstract}

\section{Introduction}

In \cite{summer}, we showed together with Scott Schneider that the conjugacy problem for the automorphism group of the random graph is Borel complete. In this article we aim to continue this work and examine the complexity of the conjugacy problem for a variety of countable homogeneous structures. We begin by giving a brief overview of the above concepts.

Let $\mathcal L=\{R_i\}$ be a countable set of relation symbols, where each $R_i$ has arity $n_i$. Then the \emph{space of countable $\mathcal L$-structures} is given by
\[\Mod_{\mathcal L}=\prod\mathcal P(\NN^{n_i})\text{.}
\]
Here, $\Mod_{\mathcal L}$ has the product topology, and each factor has the natural Cantor set topology. Following Friedman--Stanley \cite{FS} and Hjorth--Kechris \cite{HK}, we identify the \emph{classification problem} for a set of $\mathcal L$-structures $C\subset\Mod_{\mathcal L}$ with the \emph{isomorphism equivalence relation} on $C$. In this article we will most often consider the language $\mathcal L=\{R\}$ where $R$ is a binary relation, and classes $C$ such as the countable undirected graphs, digraphs, linear orderings, and so on.

In order to weigh the relative complexity of such classification problems, we use the following notion of reducibility between equivalence relations. First, recall that a Borel structure on a set $X$ is said to be \emph{standard} if it arises as the Borel $\sigma$-algebra of a separable, completely metrizable topology on $X$. Now if $E,F$ are equivalence relations on standard Borel spaces $X,Y$, then $E$ is said to be \emph{Borel reducible} to $F$, written $E\leq_BF$, if there exists a Borel function $f\colon X\to Y$ such that for all $x,x'\in X$,
\[x\mathrel{E}x'\iff f(x)\mathrel{F}f(x')\text{.}
\]
Intuitively, if you have a set of complete invariants for $F$, and if $E\leq_BF$, then by composing with the reduction function $f$ you can use the same invariants for $E$ as well.

If $E$ is Borel reducible to the equality relation on some (any) standard Borel space, then $E$ is said to be \emph{smooth} or completely classifiable. On the other end of the spectrum, if $E$ has the property that for any countable language $\mathcal L$ and any Borel class $C\subset\Mod_{\mathcal L}$ the isomorphism relation on $C$ is reducible to $E$, then $E$ is said to be \emph{Borel complete}. We remark that if $E$ is a Borel complete equivalence relation then $E$ is necessarily a non-Borel subset of $X\times X$ \cite{FS}.

We will use the following examples of Borel complete equivalence relations. The result is essentially folklore.

\begin{thm}
  The isomorphism equivalence relation on each of the following classes of countable structures is Borel complete:
\begin{itemize}
\item linear orders;
\item tournaments;
\item $K_n$-free graphs, where $K_n$ is the complete graph on $n$ vertices and $n\geq 3$; and
\item $\mathcal{F}$-avoiding digraphs, where $\mathcal{F}$ is a family of finite tournaments, each of size $\geq 3$.
\end{itemize}
\end{thm}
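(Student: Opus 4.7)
The plan is to handle each class in turn by exhibiting a Borel reduction from a previously known Borel-complete class. The two starting points I would invoke are the classes of countable linear orders and countable graphs, each of which is Borel complete by Friedman--Stanley \cite{FS}; the linear orders case is therefore immediate.

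For \emph{tournaments}, the simplest route is to reduce directly from linear orders. A linear order $<$ on $\NN$ may be viewed as a tournament via the rule $v \to w$ iff $v < w$, and the resulting map on the underlying Borel spaces of structures is already a Borel reduction of isomorphism: any tournament isomorphism between two linear orders automatically preserves the strict order relation, so the two isomorphism notions coincide on the image.

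For \emph{$K_n$-free graphs} with $n \geq 3$, I would reduce from the Borel-complete class of countable graphs by a subdivision construction. Given $G=(V,E)$, form $G'$ by first attaching a pendant leaf $v^\star$ to each $v \in V$, and then replacing each edge $\{u,v\} \in E$ by a path $u - x_{uv} - v$ through a new intermediate vertex $x_{uv}$. The resulting graph is bipartite, with the original vertices on one side and all newly added vertices on the other, hence triangle-free and in particular $K_n$-free for every $n \geq 3$. Moreover the original vertices of $G$ are recovered from $G'$ as exactly those vertices having a neighbor of degree one, so $G_1 \cong G_2$ if and only if $G_1' \cong G_2'$.

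For \emph{$\mathcal{F}$-avoiding digraphs}, I would reduce from countable graphs by a doubling construction: given $G=(V,E)$, define the digraph $G^*$ on $V$ by declaring $(u,v)$ an arc iff $(v,u)$ is an arc iff $\{u,v\} \in E$. The arc count between any two distinct vertices of $G^*$ is then $0$ or $2$, so no induced subdigraph on at least two vertices is a tournament; in particular no member of $\mathcal{F}$ embeds into $G^*$ as an induced subdigraph. The map $G \mapsto G^*$ is plainly Borel and preserves and reflects isomorphism. The main subtlety across the whole argument is the pendant-leaf marking used in the $K_n$-free case: without it, degree-$2$ original vertices could not be distinguished from subdivision vertices, and the reduction would fail to reflect isomorphism.
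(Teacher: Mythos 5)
Your first three items are fine. The tournament case is exactly the paper's argument (a linear order, viewed as a tournament, has the same isomorphisms, so the identity map is the reduction), and your subdivision-plus-pendant construction for the $K_n$-free case is a correct and more explicit alternative to the paper's one-line appeal to modifying the ``tag'' of Gao's Theorem~13.1.2. One small slip there: your recovery rule (``original vertices are exactly those with a neighbor of degree one'') also selects the pendant $v^\star$ when $v$ is isolated in $G$, since then the component of $G'$ is a single edge with both endpoints of degree one. This is harmless --- the $K_2$ components of $G'$ correspond exactly to the isolated vertices of $G$, and on all other components the rule does identify the original vertices, so any isomorphism $G_1'\cong G_2'$ still induces an isomorphism $G_1\cong G_2$ --- but it should be stated.

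The genuine gap is in the fourth item. In this paper a digraph is by definition antisymmetric: $a\to b$, $b\to a$, or neither, \emph{but not both}; the $\mathcal F$-avoiding digraphs are oriented graphs omitting the tournaments in $\mathcal F$, i.e.\ the age of Cherlin's $\Gamma_{\mathcal F}$, which is precisely what is needed when this theorem is fed into Theorem~\ref{thm:digraphs}. Your doubling construction $G^*$ puts both arcs $(u,v)$ and $(v,u)$ whenever $\{u,v\}$ is an edge of $G$, so $G^*$ is not a digraph in this sense at all; the map does not land in the target class, and the observation that no tournament embeds into $G^*$ is beside the point. A fix in the spirit of your third construction: take your bipartite graph $G'$ and orient every edge from the original vertex toward the new vertex (pendant or subdivision vertex). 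The result is an oriented graph whose underlying graph is $G'$, which is triangle-free, so it contains no tournament on three vertices and hence no member of $\mathcal F$ (each member has size at least $3$ and so contains a $3$-element subtournament). The forward direction is functorial as before, and conversely any isomorphism of these digraphs is in particular an isomorphism of the underlying graphs $G_1'\cong G_2'$, so your third-item argument recovers $G_1\cong G_2$. This oriented version of the gadget is essentially what the paper means by modifying the tag of Gao's proof to handle the last class.
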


\begin{proof}
  The isomorphism relation on countable linear orders is Borel complete by Theorem~3 of \cite{FS}. Since any linear order is in particular a tournament, it follows that the isomorphism relation on countable tournaments is Borel complete too. For a nice presentation of a proof that the isomorphism relation on countable graphs is Borel complete, see Theorem~13.1.2 of \cite{gao}. The ``tag'' used in this proof can be easily modified to show Borel completeness for the isomorphism relation on the remaining two classes.
\end{proof}

In this article we will also study the \emph{conjugacy problem}, or the problem of deciding whether two elements in a given group are conjugate. As before, we identify the conjugacy problem for $G$ with the conjugacy equivalence relation on $G$. When $G$ is the automorphism group of a countable $\mathcal{L}$-structure $M$, this equivalence relation is actually a special case of the isomorphism equivalence relations described above. Indeed, we can identify $\Aut(M)$ with the class $C\subset\Mod_{\mathcal L\cup\{R\}}$ of all expansions $(M;R^f)$ where $R^f$ is the binary relation which is the graph of the automorphism $f$. Then two elements of $\Aut(M)$ are conjugate if and only if the corresponding structures in $C$ are isomorphic.

We will study the conjugacy problem only for structures that are \emph{homogeneous}. A structure is homogeneous if every finite partial automorphism can be extended to a full automorphism. We direct the reader's attention to the survey \cite{macpherson} for a good overview of countable homogeneous structures. We will give several examples of homogeneous structures at the beginning of each subsequent section.

Homogeneous structures and their automorphisms have been studied a great deal from the point of view of model theory and algebra; for a survey of a portion of this work see \cite{lascar}. More recently, a deep connection between structural Ramsey theory and the topological dynamics of such groups has been explored, as detailed in \cite{kpt} and numerous subsequent articles.

Returning to conjugacy, after the results summarized in \cite{summer} we formulated a conjecture that the conjugacy problem for automorphism groups of countable homogeneous structures is always either smooth (for ``trivial'' homogeneous structures like $\NN$ with no relations) or Borel complete (for ``complicated'' homogeneous structures like the random graph). After studying further examples, we observe that this pattern \emph{mostly} holds, even though we found an exception in Theorem~\ref{thm:Ginfty}. It is our hope that a model theorist will look upon our results with a knowing wink and suggest or prove the right conjecture.

In Section~\ref{sec:LinearAndLocalOrders}, we sketch the proof that the conjugacy problem for countable homogeneous linear orders is Borel complete.  We also introduce local orders (and, more generally, the structures $S(n)$) and solve the analogous problem for them.  In Section~\ref{sec:UndirectedGraphs}, we treat countable homogeneous simple undirected graphs. In Section~\ref{sec:Digraphs} we treat countable homogeneous digraphs, including tournaments. Here, a \emph{digraph} is a graph where $a\to b$, $b\to a$, or neither, \emph{but not both}.  We leave three technical cases of countable homogeneous digraphs for a future note.

\section{Linear and local orders}
\label{sec:LinearAndLocalOrders}

\subsection{Linear orders}
\label{sec:linear}

There is only one countable homogeneous linear order, called the rational order $\QQ$. This is perhaps the best-known nontrivial homogenous structure because it is the unique countable dense linear order without endpoints.  Foreman has shown in \cite[Theorem~76]{F} that the conjugacy relation on $\Aut(\QQ)$ is Borel complete. We present here a slightly streamlined variant of his proof, since the details will be useful in the next subsection.

\begin{thm}[\protect{\cite[Theorem~76]{F}}]\label{thm:foreman}
  The isomorphism relation on countable linear orders is Borel reducible to the conjugacy relation on $\Aut(\QQ)$. Hence the conjugacy relation on $\Aut(\QQ)$ is Borel complete.
\end{thm}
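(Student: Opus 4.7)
The plan is to construct a Borel map $L \mapsto f_L$ from countable linear orders to $\Aut(\QQ)$ from which the isomorphism type of $L$ can be recovered from the conjugacy class of $f_L$ in $\Aut(\QQ)$. The conjugation invariant I will exploit is the linearly ordered collection of maximal convex components of the support $\{q\in\QQ : f_L(q)\neq q\}$: since any $h\in\Aut(\QQ)$ is order preserving and conjugation carries supports onto supports, $h$ induces an order isomorphism between these component structures for $f_L$ and its conjugate.

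Given a countable linear order $L$, I would form the linear order
\[ N_L \;:=\; \QQ \;+\; \sum_{x\in L}(B_x + Q_x) \;+\; \QQ, \]
where each $B_x$ and each $Q_x$ is a designated copy of $\QQ$ and the middle sum is taken in $L$-order. Since $N_L$ is a countable dense linear order without endpoints, $N_L\cong\QQ$; fixing such an identification uniformly in $L$, I would define $f_L$ to be the identity on the two outer $\QQ$ summands and on every \emph{buffer} $B_x$, and on each $Q_x$ to act as the shift $r\mapsto r+1$ under a chosen isomorphism $Q_x\cong\QQ$. This gives an element $f_L\in\Aut(\QQ)$, and $L\mapsto f_L$ is evidently Borel.

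For the implication that $f_L$ conjugate to $f_{L'}$ entails $L\cong L'$: any conjugator in $\Aut(\QQ)$ induces an order isomorphism between the two linear orders of maximal convex components of the supports. The key observation is that because $B_y$ sits immediately before $Q_y$ for every $y\in L$, between any two distinct support blocks $Q_x$ and $Q_y$ in $N_L$ there always lies a buffer $B_z$ on which $f_L$ is the identity; consequently the maximal convex components of the support of $f_L$ are exactly the blocks $Q_x$ for $x\in L$, linearly ordered by $L$. Likewise for $f_{L'}$, so $L\cong L'$.

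For the reverse implication: given an isomorphism $\phi\colon L\to L'$, I would assemble $h\in\Aut(\QQ)$ conjugating $f_L$ to $f_{L'}$ piecewise by choosing an order isomorphism between corresponding summands of $N_L$ and $N_{L'}$, namely arbitrary order isomorphisms on the two outer $\QQ$ summands and on each pair $B_x\to B_{\phi(x)}$, and an order isomorphism $Q_x\to Q_{\phi(x)}$ that intertwines the shift. The hard part will be verifying that the buffer structure really forces the desired description of the maximal convex components of the support, especially when $L$ is dense, and then arranging the identification $N_L\cong\QQ$ and the piecewise definition of $h$ in a genuinely Borel manner; both should reduce to standard facts about $\QQ$, including that any two fixed-point-free, strictly increasing elements of $\Aut(\QQ)$ are conjugate.
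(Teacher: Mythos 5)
Your construction is correct, and it takes a genuinely different route from the paper. The paper encodes $L$ as the \emph{fixed-point set} of $\phi_L$: it first proves a "perfect embedding" lemma (an embedding $\alpha\colon L\to\QQ$ so that every point outside the image has an immediate predecessor and successor in the image), fixes the image, fills every complementary interval with a single up-bump, and then gets the forward direction from Glass's classical theorem that elements of $\Aut(\QQ)$ are conjugate whenever there is an order- and type-preserving bijection of their orbitals. You instead encode $L$ as the order type of the \emph{support components}: you build the order $N_L=\QQ+\sum_{x\in L}(B_x+Q_x)+\QQ$, identify it with $\QQ$ by Cantor's theorem, shift on each $Q_x$, and recover $L$ because the identity buffers $B_y$ separate any two support blocks — so the maximal convex components of the support are exactly the $Q_x$, ordered as $L$, and this data is carried along by any conjugator. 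Your approach buys you two simplifications: the sum-plus-buffers construction replaces the perfect-embedding lemma (the separation you worried about for dense $L$ is in fact immediate, since for $x<y$ the buffer $B_y$ always lies between $Q_x$ and $Q_y$), and your explicit piecewise conjugator — summand to summand along the isomorphism $L\cong L'$, with the shift intertwined on the $Q_x$ — replaces the appeal to Glass's lemma (in effect you reprove the special case of it that you need). The paper's approach buys a slightly more direct invariant (the fixed-point set itself) and outsources all conjugacy-building to the quoted classical result. The remaining Borelness issues you flag (uniform identification $N_L\cong\QQ$ via a canonical back-and-forth with a fixed enumeration) are handled exactly as in the paper's closing remark, and note that only the map $L\mapsto f_L$ needs to be Borel — the conjugator merely has to exist.
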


\begin{proof}
  We must construct a Borel map $L\mapsto\phi_L$ from the set of linear orders on $\NN$ into $\Aut(\QQ)$ which satisfies:
\[\text{$L$ is isomorphic to $L'$}\iff\text{$\phi_L$ is conjugate to $\phi_{L'}$}\;.
\]
To ensure that $(\Leftarrow)$ holds, i.e., that $L$ can be recovered up to isomorphism from the conjugacy class of $\phi_L$, we simply arrange that the fixed point set of $\phi_L$ is isomorphic to $L$.  The main point in guaranteeing $(\Rightarrow)$ is to make sure that if $L$ and $L'$ are isomorphic, then the linear orderings of orbitals of $\phi_L$ and $\phi_{L'}$ will be isomorphic.

Here, the \emph{orbitals} of $\phi\in\Aut(\QQ)$ are the convex closures of the orbits $\set{\phi^n(q):n\in\ZZ}$.  Evidently, every orbital $R$ of $\phi$ is either:
\begin{itemize}
\item an ``up-bump:'' for all $q\in R$ we have $\phi(q)>q$;
\item a ``down-bump:'' for all $q\in R$ we have $\phi(q)<q$; or
\item a singleton which is a fixed point of $\phi$.
\end{itemize}
\noindent What we need is the following classical result:

\begin{lem}[\protect{\cite[Theorem~2.2.5]{G}}]\label{lem:glass}
  Let $\phi,\psi\in\Aut(\QQ)$ and suppose that there is an order-preserving bijection between the orbitals of $\phi$ and the orbitals of $\psi$ which is also \emph{type preserving}, in the sense that it sends up-bumps to up-bumps, down-bumps to down-bumps, and fixed points to fixed points.  Then $\phi$ and $\psi$ are conjugate in $\Aut(\QQ)$.
\end{lem}

Hence, to show $(\Rightarrow)$, it suffices to ensure that the order type (and type) of the orbitals of $\phi_L$ depends only on the order type of $L$.  For this, we will need to be a little bit careful:

\begin{lem}\label{lem:perfect}
  For any countable linear order $L$, there exists an order-preserving embedding $\alpha\colon L\to\QQ$ such that for every $q\in\QQ\smallsetminus\im(\alpha)$ there is a greatest element $q^-$ of $\im(\alpha)\cup\set{-\infty}$ below $q$ and a least element $q^+$ of $\im(\alpha)\cup\set{\infty}$ above $q$.
\end{lem}

\begin{proof}
  Let $\alpha_0\colon L\to\QQ$ be any embedding.  Letting $S$ be $\im(\alpha_0)$ together with the set of points $q\in\QQ\smallsetminus\im(\alpha_0)$ satisfying the desired property, it is easy to see that $S$ is a dense linear order without endpoints.  Hence there exists an isomorphism $i\colon S\to\QQ$, and now the composition $\alpha=i\circ\alpha_0$ is as desired.
\end{proof}

We now describe the construction of the Borel assignment $L\mapsto\phi_L$. Given the countable linear order $L$, let $\alpha_L\colon L\to\QQ$ be an embedding satisfying the property in Lemma~\ref{lem:perfect}.  We begin our definition of $\phi_L$ by declaring that it fixes every point of $\im(\alpha_L)$.  On the other hand, if $q\in\QQ\smallsetminus\im(\alpha_L)$, then we wish to define $\phi_L$ on the interval $(q^-,q^+)$ so as to guarantee that $(q^-,q^+)$ is an up-bump for $\phi_L$.  This can easily be done explicitly, for instance, using a piecewise linear function similar to the one pictured in Figure~\ref{fig:bump}.

\begin{figure}
  \begin{tikzpicture}
    \draw (0,0) -- (3.8,0);
    \draw (0,0) -- (0,3.2);
    \node[circle,draw,inner sep=1pt] at (1,1) (a) {};
    \node[circle,draw,inner sep=1pt] at (3,3) (b) {};
    \node[circle,fill,draw,inner sep=1pt] at (2,2.5) (c) {}
    edge (a) edge (b);
    \node[anchor=north] at (a) {$(a,a)$};
    \node[anchor=west] at (b) {$(b,b)$};
    \node[anchor=south east] at (c) {$(c,d)$};
  \end{tikzpicture}
  \caption{An ``up-bump'' on the interval $(a,b)$. Here one may take $c=.5a+.5b$ and $d=.25a+.75b$.\label{fig:bump}}
\end{figure}
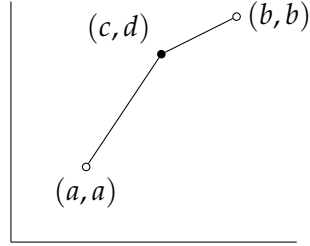

Since the fixed-point set of $\phi_L$ is exactly $\im(\alpha_L)$, we have guaranteed $(\Leftarrow)$. Since every remaining orbital of $\phi_L$ is an up-bump, the orbital structure of $\phi_L$ depends only on the order type of $L$.  Thus Lemma \ref{lem:glass} guarantees $(\Rightarrow)$.

Finally, we observe that our construction can be made explicit by fixing an enumeration of $\QQ$ in advance and using it to carry out all back-and-forth constructions.  In other words, we can ensure that the map $L\mapsto\phi_L$ is a Borel assignment. This completes the proof of Theorem~\ref{thm:foreman}.
\end{proof}

\subsection{Local orders and the structures $\bm{S(n)}$}
\label{sec:local}

The class of local orders is closely related to the class of linear orders. A \emph{local order} is a tournament with the property that for every $b$ both $\set{a\in T:a\rightarrow b}$ and $\set{c\in T:b\rightarrow c}$ are linearly ordered by $\to$. As was the case with linear orderings, there is a unique countable homogeneous local order called $\mathcal O$. See Section~6 of \cite{cameron} for more on local orders.

The structure $\mathcal O$ can be realized as one of a family of homogeneous structures $S(n)$, which are defined as follows. Begin with a fixed countable dense subset $D$ of the unit circle of the complex plane such that for every $x,y\in D$ neither $\arg(x)$ nor $\arg(x/y)$ is a rational multiple of $\pi$. For each fixed $n\geq2$, the structure $S(n)$ consists of $n$ binary relations $\to_k$ on $D$ defined by $x\to_ky$ iff $\arg(x/y)\in(2\pi k/n,2\pi(k+1)/n)$. (Of course only $\to_0,\ldots,\to_{\lceil n/2\rceil}$ are formally needed.) Each of the structures $S(n)$ is easily seen to be homogeneous.

Now the local order $\mathcal O$ can be defined from $S(2)$ by letting $x\to_{\mathcal O}y$ iff $x\to_0y$ for all $x,y\in D$. The structure $S(3)$ also gives rise to a homogeneous digraph on $D$ defined by $x\to y$ iff $x\to_0 y$. As we shall see in Section~\ref{sec:Digraphs}, the list of homogeneous digraphs does not include any structure corresponding to $S(n)$ for $n>3$.

\begin{thm}\label{thm:Sn}
  Let $n\geq2$. The isomorphism relation on countable linear orders is Borel reducible to the conjugacy relation on $\Aut(S(n))$.  Hence the conjugacy relation on $\Aut(S(n))$ is Borel complete.
\end{thm}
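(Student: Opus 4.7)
The plan is to mimic the construction in the proof of Theorem~\ref{thm:foreman}, carrying it out inside a linearly ordered cone of $S(n)$. Fix a base point $p_0 \in D$ and let $A = \set{x \in D : x \to_0 p_0}$. For distinct $x, y \in A$, the difference $\arg(x)-\arg(y)$ lies in $(-2\pi/n, 2\pi/n) \setminus \set{0}$, so exactly one of $x \to_0 y$ or $y \to_0 x$ holds; combined with the density of $D$, this makes $(A, \to_0)$ a countable dense linear order without endpoints, hence isomorphic to $\QQ$. Given a countable linear order $L$, I would apply Lemma~\ref{lem:perfect} and Foreman's recipe inside $(A, \to_0)$ to produce an embedding $\alpha_L \colon L \to A$ and a bijection $\phi_L^A \colon A \to A$ whose fixed-point set is $\im(\alpha_L)$ and whose other orbitals are up-bumps.

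The key observation is that the extended map $\phi_L^A \cup \set{(p_0, p_0)}$ is already a partial automorphism of $S(n)$: within $A \cup \set{p_0}$ the only sector relations that can hold are $\to_0$ and its reverse $\to_{n-1}$ between pairs in $A$, together with $x \to_0 p_0$ for every $x \in A$, and all of these are preserved by construction. The next step is to extend this (infinite) partial automorphism to a full $\phi_L \in \Aut(S(n))$ by a back-and-forth argument. I would enumerate $A$ and $D \setminus (A \cup \set{p_0})$ in parallel, building a chain of finite partial isomorphisms $\psi_k$ of $S(n)$. At each stage, to add a new element of $A$ to the domain the image is forced to be its $\phi_L^A$-value; to add an external element one invokes the ultrahomogeneity of $S(n)$ to choose a compatible image. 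A standard genericity condition arranges that no extra fixed points arise, and fixing an enumeration of $D$ in advance makes $L \mapsto \phi_L$ Borel.

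To verify the reduction, the direction $(\Leftarrow)$ is straightforward: conjugation preserves fixed-point sets as $S(n)$-substructures, and the fixed-point set of $\phi_L$ restricted to the cone of $p_0$ is order-isomorphic to $L$ via $\alpha_L$. The direction $(\Rightarrow)$ requires an $S(n)$-analogue of Lemma~\ref{lem:glass}: two elements of $\Aut(S(n))$ with the ``same'' orbital data in the cone (in a type-preserving sense) are conjugate, which I would establish by another back-and-forth using ultrahomogeneity. The main obstacle, as I see it, is maintaining compatibility between the finite back-and-forth and the infinite prescribed part $\phi_L^A$: when choosing the image of an external point $b$, one must pick $\psi(b)$ so that it has the correct type over all of $A$ via $\phi_L^A$, not merely over the finitely many $A$-elements enumerated so far. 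This is resolved by interleaving the enumerations carefully and leaning on the full extension power of ultrahomogeneity, but the bookkeeping is the delicate part of the argument.
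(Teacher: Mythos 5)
Your reduction of $(A,\to_0)\cong\QQ$ and the observation that $\phi_L^A\cup\set{(p_0,p_0)}$ is a partial automorphism are fine, but the extension step is a genuine gap, and it is exactly where the difficulty of the theorem lies. Homogeneity of $S(n)$ extends \emph{finite} partial automorphisms; it gives no purchase on an infinite prescribed part, and in fact the infinite partial map you start from will in general \emph{not} extend to any element of $\Aut(S(n))$. To see the obstruction, note that a point $b\in D\smallsetminus(A\cup\set{p_0})$ with $b\to_k p_0$ is related to the points of $A$ according to a Dedekind cut of the dense order $(A,\to_0)$: the relation of $b$ to $a\in A$ switches from $\to_k$ to $\to_{k-1}$ as $a$ crosses a certain cut, and distinct external points determine distinct cuts. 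Any $\phi\in\Aut(S(n))$ extending $\phi_L^A\cup\set{(p_0,p_0)}$ must send $b$ to an external point with the same relation to $p_0$ realizing the image cut under the extension of $\phi_L^A$ to the completion of $A$; so $\phi_L^A$ must carry the countable dense family of cuts realized by external points (sector by sector) onto itself. An automorphism of $(A,\to_0)$ produced by Foreman's recipe---bumps chosen with no reference to the rest of $D$---will generally violate this, and then some external $b$ has no admissible image at all, no matter how the enumerations are interleaved: the constraint comes from the whole of $A$, which you have already frozen. So ``leaning on the full extension power of ultrahomogeneity'' cannot repair the construction; the map on $A$ has to be built simultaneously with its action on the rest of $D$.

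This is precisely what the paper's proof arranges: it unfolds all of $S(n)$ into the single arc $A_0$ via the map $f(x)=xe^{-2\pi ik/n}$ for $x\in A_k$, defines the automorphism at once on the linear order $f(S(n))$, which encodes all of $D$ (respecting the partition of $f(D)$ into the $n$ dense classes $f(D\cap A_k)$), and then pulls it back through $f$. Your proposed $S(n)$-analogue of Lemma~\ref{lem:glass} runs into the same problem: ``same orbital data in the cone'' does not control how a would-be conjugator must act on the external points, and producing one is again a cut-matching problem over all of $D$, not a routine homogeneity back-and-forth. The recovery direction is essentially fine (the fixed-point substructure $\im(\alpha_L)\cup\set{p_0}$ does determine $L$ up to isomorphism), but as written the forward machinery does not produce the automorphisms $\phi_L$ at all.
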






\begin{proof}
  Given a countable linear ordering $L$, we will define an automorphism $\phi_L$ of $S(n)$ in such a way that $L\cong L'$ iff $\phi_L$ and $\phi_{L'}$ are conjugate in $\Aut(S(n))$. Note that we lose no generality in assuming that $L$ has lower and upper endpoints.

  To begin, let $A_k=\set{x\mid 2\pi k/n<\arg(x)<2\pi(k+1)/n}$ denote the $k^\text{th}$ ``arc'' of the unit circle. Since $D\cap A_0$ is naturally linearly ordered by argument value (or $\to_0$), we may let $\alpha_L$ be an embedding from $L$ into $D\cap A_0$ which satisfies the property in Lemma~\ref{lem:perfect}.

  Next let $f$ be the map from the unit circle to $A_0$ defined by $f(x)=xe^{-2\pi ik/n}$ whenever $x\in A_k$. Notice that $f$ is one-to-one on the subset $D$, and also that $f(D)$ is naturally linearly ordered by argument value. It is also naturally colored by which sector the points came from, that is, for each $x\in A_k$ we assign $f(x)$ the color $k$.

  We now wish to define a color-preserving automorphism $\psi_L$ of the linear ordering $f(D)$ whose fixed point set is exactly the closure of $\im(\alpha_L)$, which has a down-bump below the minimum of $\im(\alpha_L)$, a down-bump above the maximum of $\im(\alpha_L)$, and up-bumps elsewhere. This can be done similarly to the previous proof, except that the bumps must be constructed by a back-and-forth argument to ensure they are color-preserving. Finally, we let $\phi_L=f^{-1}\circ\psi_L\circ f$ be the corresponding sector-preserving automorphism of $S(n)$.  Notice that $\phi_L \upharpoonright_{A_0}=\psi_L$.

  By Lemma~\ref{lem:glass}, if $L\cong L'$, then $\psi_L$ is conjugate to $\psi_{L'}$ and it follows that $\phi_L$ is conjugate to $\phi_{L'}$. On the other hand, any $\phi_{L}$ has just two special fixed points which are the endpoints of down-bumps, and we can recover $L$ as the linear order of fixed points that lie between (in circular order) these two special fixed points. Thus using the argument of Theorem~\ref{thm:foreman}, if $\phi_{L'}$ is conjugate to $\phi_L$ we must have $L'\cong L$.
\end{proof}

\section{Undirected graphs}
\label{sec:UndirectedGraphs}

Lachlan and Woodrow \cite{lachlan} classified the countably infinite homogeneous undirected graphs as follows:
\begin{itemize}
\item for $m,n\leq\infty$ and either $m$ or $n$ infinite, the graph $m\cdot K_n$ consisting of $m$ many disjoint copies of $K_n$ (section~\ref{sec:mKn});
\item the generic undirected graph, also known as the random graph (see \cite{summer});
\item for $n<\infty$, the generic $K_n$-free graph (section~\ref{sec:random}); and
\item graph complements of each of these (they have the same automorphism group).
\end{itemize}

\subsection{Composite undirected graphs}
\label{sec:mKn}

We first show that the classification of automorphisms of $m\cdot K_n$ is smooth when one of $m$ or $n$ is finite.

\begin{thm}\label{thm:Gn}
  If $m,n\leq\infty$ and either $m$ or $n$ is finite, then the conjugacy problem for the automorphism group of $m\cdot K_n$ is smooth.
\end{thm}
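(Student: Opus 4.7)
The plan is to identify $\Aut(m \cdot K_n)$ with the wreath product $S_n \wr S_m$ (where $S_\infty$ denotes the symmetric group on a countably infinite set) and extract a complete Borel invariant for conjugacy from the classical description of conjugacy classes in wreath products. If both $m$ and $n$ are finite then $m \cdot K_n$ is itself finite and smoothness is immediate, so we may assume exactly one of $m,n$ is infinite.

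Each $\phi \in \Aut(m \cdot K_n)$ decomposes canonically as a pair $(f,\bar\phi)$, where $\bar\phi$ is the induced permutation of the $m$ connected components and $f$ records the $S_n$-valued data between components. The standard classification says that two elements of $G \wr S_m$ are conjugate iff their induced permutations have the same cycle structure, and for each finite cycle length $\ell$ the multisets of conjugacy classes in $G$ of the cycle products $f(\bar\phi^{\ell-1}(i_0)) \cdots f(\bar\phi(i_0)) f(i_0)$ (over cycles of length $\ell$) coincide. For an infinite cycle there is no closed cycle product; a direct inductive calculation shows that any two elements supported on a single infinite $\bar\phi$-orbit are conjugate in $G \wr \ZZ$, so infinite cycles contribute to the invariant only through their count.

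In Case 1 ($n < \infty$, $m = \infty$), the group $S_n$ has only finitely many conjugacy classes $[g_1], \ldots, [g_r]$, and a complete invariant for $\phi$ is the function
\[
  I_\phi \colon (\NN \cup \{\infty\}) \times \{[g_1], \ldots, [g_r]\} \to \NN \cup \{\infty\}
\]
sending $(\ell, [g_j])$ to the number of cycles of $\bar\phi$ of length $\ell$ whose cycle product lies in $[g_j]$ (with the $\ell = \infty$ row just recording the total number of infinite cycles). In Case 2 ($n = \infty$, $m < \infty$), the permutation $\bar\phi$ has only finitely many, necessarily finite, cycles, and a complete invariant is the finite multiset of pairs $(\ell, [g])$ where $[g]$ is a conjugacy class in $S_\infty$, itself coded by its cycle type in the standard Borel space $(\NN \cup \{\infty\})^{\NN \cup \{\infty\}}$. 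In either case the invariant takes values in a standard Borel space.

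The main technical step is to verify that the assignment $\phi \mapsto I_\phi$ is Borel. Given $\phi$ one identifies the components of $m \cdot K_n$ (as the maximal $n$-cliques) in a Borel way, reads off $\bar\phi$, enumerates the cycles of $\bar\phi$ using the least-index representative of each, and computes the corresponding cycle product as a fixed composition of values of $f$; each step is plainly Borel in $\phi$, and counting cycles by length and by the conjugacy class of their product is then a countable Boolean operation. Combined with the wreath product classification, this gives $\phi$ conjugate to $\phi'$ iff $I_\phi = I_{\phi'}$, i.e.\ conjugacy is smooth.
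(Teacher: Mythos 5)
Your proposal is correct and takes essentially the same route as the paper: the paper's ``twist type'' of a finite cycle of components is exactly your cycle product, and its complete invariants (counts of $k$-cycles by twist class plus the number of infinite cycles when $m=\infty$, and a coded finite set of twist classes with multiplicities when $m<\infty$) coincide with yours. The only difference is expository --- the paper proves completeness of the invariant directly by constructing the conjugating automorphism cycle by cycle, where you cite the standard wreath-product conjugacy classification.
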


Before beginning the proof, observe that each automorphism $\phi\in\Aut(m\cdot K_n)$ acts on the set of copies of $K_n$ by an element $\phi'\in S_m$.  Recall that for $m\leq\infty$ the elements of $S_m$ are determined up to conjugacy by their \emph{cycle type}, that is, the sequence which tells the number of $k$-cycles for each $k\leq\infty$.  The situation is only slightly more complicated for elements of $\Aut(m\cdot K_n)$ since if $k<\infty$, each $k$-cycle of copies of $K_n$ contains an additional piece of information: the permutation of $K_n$ obtained by following the cycle from one copy of $K_n$ all the way around to the start.  More precisely, given $\phi\in\Aut(m\cdot K_n)$ and a $k$-cycle $Y_0,\ldots,Y_{k-1}$ of copies of $K_n$, we can consider $\phi^k\restriction Y_0$ as an element of $S_n=\Aut(Y_0)$. The \emph{twist type} of the cycle $Y_0,\ldots,Y_{k-1}$ is then the conjugacy equivalence class of $\phi^k\restriction Y_0$ in $S_n$.  This is well-defined since $\phi^j$ witnesses that $\phi^k\restriction Y_i$ and $\phi^k\restriction Y_{i+j}$ are conjugate.

\begin{proof}[Proof of Theorem~\ref{thm:Gn}]
  Let us first assume that $m=\infty$ and $n$ is finite. Let $T$ denote the set of conjugacy classes in $S_n=\Aut(K_n)$.  We claim that elements of $\Aut(\infty\cdot K_n)$ are classified up to conjugacy by the following invariants:
  \begin{itemize}
  \item for each $k<\infty$ and $t\in T$, the number of $k$-cycles of copies of $K_n$ with twist type equal to $t$; and
  \item the number of infinite cycles of copies of $K_n$.
  \end{itemize}
  It is easy to see that conjugate automorphisms will possess the same invariants.  Conversely, suppose that $\phi$ and $\psi$ have the same invariants.  Let $Y_0,\ldots,Y_{k-1}$ and $Z_0,\ldots Z_{k-1}$ be cycles of copies of $K_n$ for $\phi$ and $\psi$, respectively, and assume they have the same twist type.  Then there is a bijection $\delta_0\colon Y_0\to Z_0$ which satisfies $\delta_0\circ\phi^k=\psi^k\circ\delta_0$. This implies that it is well-defined to say: extend $\delta_0$ to a map $\delta$ on the entire cycle by letting $\delta(\phi^i(y))=\psi^i\circ\delta_0(y)$ for all $i<k$. Applying the same construction to each cycle, we can define $\delta$ on all of $\infty\cdot K_n$.  (For infinite cycles there is not even any twist type to worry about.)  It is easy to see that this $\delta$ is an automorphism of $\infty\cdot K_n$ and satisfies $\delta\circ\phi=\psi\circ\delta$.

  Next, we consider the case when $m$ is finite and $n=\infty$. In this case the set $T$ of conjugacy classes of $S_\infty=\Aut(K_\infty)$ is uncountable. But since $m$ is finite, each fixed automorphism only mentions a finite set of elements of $T$ as twist types of cycles of copies of $K_\infty$. Thus in this case the elements of $\Aut(m\cdot K_\infty)$ are classified by:
  \begin{itemize}
  \item the finite subset $T_0\subset T$ of elements realized as the twist type of some cycle of copies of $K_\infty$; and
  \item for each $k<\infty$ and $t\in T_0$, the number of $k$-cycles of copies of $K_\infty$ with twist type equal to $t$.
  \end{itemize}
  It is easy to show that a finite subset of $T$ can be coded by a single real number (for this, use a fixed linear ordering of $T$ to enumerate the finite set, and then use any Borel bijection $\bigcup_{i<\omega} T^i\to\RR$). Thus this is once again a smooth classification.
\end{proof}

Although the situation when $m=n=\infty$ is similar, in this case each automorphism may mention countably many elements from the uncountable set $T$ of twist types. This turns out to be at a higher level of complexity than the smooth relations, but still lower than the Borel complete relations. In this way the following result is unique among all the results in this paper.

Before stating the result, we let $E_\sset$ denote the equivalence relation on $\RR^\omega$ given by $\sigma\mathrel{E_\sset}\tau$ iff $\sigma$ and $\tau$ enumerate the same countable set. The Borel complexity of $E_\sset$ is known to lie properly in between the smooth and Borel complete complexities (see for example \cite{gao}, Chapter~8, where $E_\sset$ is denoted $=^+$). 

\begin{thm}\label{thm:Ginfty}
  The conjugacy problem for the automorphism group of the graph $\infty\cdot K_\infty$ is Borel bireducible with $E_\sset$.
\end{thm}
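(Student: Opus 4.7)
The plan is to establish both directions of the bireducibility separately, relying on an adaptation of the twist-type classification used in the proof of Theorem~\ref{thm:Gn}. Let $T$ denote the (now uncountable) set of conjugacy classes in $S_\infty=\Aut(K_\infty)$; since conjugacy in $S_\infty$ is smooth via the cycle-type invariant, $T$ is naturally identified with a Borel subset of the Polish space $(\NN\cup\set{\aleph_0})^{\NN\cup\set{\infty}}$, and is therefore standard Borel. The underlying classification I would use is that an automorphism $\phi\in\Aut(\infty\cdot K_\infty)$ is determined up to conjugacy by the function $m_\phi$ which records, for each $(k,t)\in\NN\times T$, the cardinal (in $\NN\cup\set{\aleph_0}$) of $k$-cycles of copies of $K_\infty$ with twist type $t$, together with the cardinal of $\infty$-cycles of copies.

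For the upper bound I would define a Borel set $A_\phi\subset\RR$ containing codes of all triples $(k,t,i)$ with $k\in\NN$, $t\in T$, and $1\leq i\leq m_\phi(k,t)$, together with codes of pairs $(\infty,j)$ for $j$ ranging up to the number of $\infty$-cycles of $\phi$. Since $m_\phi(k,t)=\sup\set{i:(k,t,i)\in A_\phi}$ (with the convention $\sup\emptyset=0$), the set $A_\phi$ determines $m_\phi$, so $A_\phi=A_\psi$ if and only if $m_\phi=m_\psi$ if and only if $\phi$ and $\psi$ are conjugate. Enumerating $A_\phi$ as a sequence $a_\phi\in\RR^\omega$ in a Borel manner (padding with repetitions if $A_\phi$ happens to be finite) then provides the desired Borel reduction from conjugacy to $E_\sset$.

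For the lower bound, fix a Borel injection $\iota\colon\RR\to T$; for instance, identify $\RR$ with $2^\NN$ and send $x$ to the conjugacy class of the permutation of $\NN$ having exactly one $p_n$-cycle whenever $x_n=1$ and fixing all other points, where $p_n$ denotes the $n$-th prime. Partition the copies of $K_\infty$ in $\infty\cdot K_\infty$ into countably many groups $G_n$, each containing infinitely many copies. Given $\sigma=(\sigma_n)\in\RR^\omega$, define $\phi_\sigma$ to fix each copy setwise and to act on every copy in $G_n$ as some permutation in the conjugacy class $\iota(\sigma_n)$. Every cycle of copies under $\phi_\sigma$ is then a singleton, and for each $t\in T$ the number of such singleton cycles with twist type $t$ is $\aleph_0$ if $t\in\iota(\set{\sigma_n:n\in\NN})$ and $0$ otherwise, since any repetitions among the $\sigma_n$ simply merge several $G_n$'s into a single $\aleph_0$-sized block. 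By the classification, $\phi_\sigma$ is conjugate to $\phi_\tau$ if and only if $\iota(\set{\sigma_n})=\iota(\set{\tau_n})$, which by injectivity of $\iota$ is equivalent to $\set{\sigma_n}=\set{\tau_n}$, i.e.\ $\sigma\mathrel{E_\sset}\tau$.

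The main obstacle is the classification lemma of the first paragraph. The back-and-forth pasting of partial bijections is structurally the same as in the proof of Theorem~\ref{thm:Gn}, but one must now paste over the uncountable index set $T$; the key observation is that each individual $\phi$ realizes only countably many twist types, so each concrete pasting task still involves only countably many pieces and yields a well-defined automorphism. Granting this, Borelness of both reductions is routine, using a fixed Borel enumeration of the copies of $K_\infty$ both to extract $m_\phi$ from $\phi$ and to code the graph of $m_\phi$ uniformly as a countable subset of $\RR$.
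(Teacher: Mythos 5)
Your proposal is correct and follows essentially the same route as the paper's proof: classify elements of $\Aut(\infty\cdot K_\infty)$ by cycle lengths of copies together with twist types, code the resulting counting invariant as a countable subset of a standard Borel space to reduce conjugacy to $E_\sset$, and conversely realize a prescribed countable set of twist types by an automorphism whose conjugacy class remembers exactly that set. Your converse construction (fixing each copy setwise and giving every realized twist type infinite multiplicity) differs only cosmetically from the paper's (infinitely many $2$-cycles with each twist type occurring once), and if anything handles repetitions in the enumerating sequence a bit more explicitly.
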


\begin{proof}
  Again let $T$ denote the set of conjugacy classes in $S_\infty=\Aut(K_\infty)$. The arguments of the previous proof imply that elements of $\Aut(\infty\cdot K_\infty)$ are classified by:
  \begin{itemize}
  \item the countable subset $T_0\subset T$ of elements realized as the twist type of some finite cycle of copies of $K_\infty$;
  \item for each $k<\infty$ and $t\in T_0$, the number of $k$-cycles of copies of $K_\infty$ with twist type equal to $t$; and
  \item the number of infinite cycles of copies of $K_\infty$.
  \end{itemize}
  We must verify that this implies the conjugacy problem for $\Aut(\infty\cdot K_\infty)$ is Borel bireducible with $E_\sset$. To see that the conjugacy problem is Borel reducible to $E_\sset$, note that we can code the invariant above using a countable subset of $T\times(\NN\cup\{\infty\})^3$. Indeed, given $\phi$, form the set of all $(t,k,l,i)$ where $t$ is a twist type occurring in $\phi$, $k\in\NN$, $l$ is the number of $k$-cycles of copies of $K_\infty$ with twist type equal to $t$, and $i$ is the number of infinite cycles of copies of $K_\infty$.

  We next reduce $E_\sset$ to the conjugacy problem for $\Aut(\infty\cdot K_\infty)$ as follows. Given a countable subset $T_0\subset T$, we form an automorphism $\phi$ of $\infty\cdot K_\infty$ which has $|T_0|$ many $2$-cycles of copies of $K_\infty$, no other cycles, and such that each $t\in T_0$ appears exactly once as a twist type.
\end{proof}

\subsection{Random graphs}
\label{sec:random}

In this section we discuss the generic graph, known as the \emph{random graph} $\Gamma$, as well as the generic  $K_n$-free graph denoted $\Gamma_n$. Here if $\mathcal C$ is a class of finite graphs (or digraphs, or relational structures) we say $G$ is \emph{generic} for the class $\mathcal C$ if $G$ is homogeneous and the set of finite substructures of $G$ is exactly $\mathcal C$. The classes $\mathcal C$ which admit a generic object are characterized by the well-known Fra\"iss\'e theory.

When dealing with generic objects, we will often use the following characterization, known as the \emph{one-point extension property}. This states that $G$ is generic for the class $\mathcal C$ if and only if every finite subset $S\subset G$ lies in $\mathcal C$, and whenever $S\cup\{x\}$ lies in $\mathcal C$ there is some $a\in G$ such that the identity function on $S$ extends to an isomorphism $S\cup\{x\}\cong S\cup\{a\}$.

In the article \cite{summer} we showed with Scott Schneider that the conjugacy problem for $\Aut(\Gamma)$ is Borel complete. The next result gives a streamlined version of the argument from \cite{summer}, and at the same time generalizes it to work for $\Aut(\Gamma_n)$ too.

\begin{thm}\label{thm:graphs}
  Let $n\geq3$. The isomorphism relation for countable $K_n$-free graphs is Borel reducible to the conjugacy problem for $\Aut(\Gamma_n)$. Hence the conjugacy problem for $\Aut(\Gamma_n)$ is Borel complete.
\end{thm}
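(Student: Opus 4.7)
The plan is to follow the overall template of Theorem~\ref{thm:foreman}: for each countable $K_n$-free graph $G$ on $\NN$ I will construct, by a Borel procedure, an automorphism $\phi_G \in \Aut(\Gamma_n)$ whose fixed-point set, viewed as an induced subgraph of $\Gamma_n$, is isomorphic to $G$, and which is otherwise as generic as possible. Since the fixed-point set is a conjugacy invariant, this immediately handles the $(\Leftarrow)$ direction of the equivalence ``$G \cong G'$ iff $\phi_G$ is conjugate to $\phi_{G'}$''; the real content lies in the $(\Rightarrow)$ direction.

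For the construction, I would first fix, for each $G$, an embedding $\alpha_G \colon G \to \Gamma_n$ enjoying a ``perfection'' property in the spirit of Lemma~\ref{lem:perfect}: for every finite $S \subseteq \im(\alpha_G)$ and every $K_n$-free one-point extension $S \cup \{x\}$, there should be infinitely many points of $\Gamma_n \setminus \im(\alpha_G)$ realizing the type of $x$ over $S$. Such an $\alpha_G$ exists by a routine back-and-forth performed inside $\Gamma_n$, and can be made Borel in $G$ by running that back-and-forth against a fixed enumeration of $\Gamma_n$. I would then let $\phi_G$ fix $\im(\alpha_G)$ pointwise and act on the complement as a fixed-point-free automorphism all of whose orbits are copies of $\ZZ$, arranged so that every $K_n$-free ``$\ZZ$-orbit type'' over each finite subset of $\im(\alpha_G)$ is realized by infinitely many orbits. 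This second back-and-forth succeeds by the one-point extension property of $\Gamma_n$, since at each stage we only need to accommodate one additional $K_n$-free vertex.

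The forward direction is then handled by a conjugacy lemma analogous to Lemma~\ref{lem:glass}: if $\phi, \psi \in \Aut(\Gamma_n)$ are built as above and their fixed-point sets are isomorphic as induced subgraphs of $\Gamma_n$, then they are conjugate. I would prove this by a third back-and-forth constructing a conjugating $\delta$ one $\ZZ$-orbit at a time, starting from an arbitrary graph isomorphism between the fixed-point sets. At each stage we have a finite partial conjugation $\delta_0$, and we want to extend it by matching a new vertex $a$ on the $\phi$-side to some $b$ on the $\psi$-side with the same graph-theoretic type over the already-matched points and belonging to a fresh $\psi$-orbit whose $\ZZ$-type over the matched part agrees with that of $a$. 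Such a $b$ exists because $\psi$ realizes every allowed $K_n$-free $\ZZ$-extension infinitely often.

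The principal obstacle is verifying that each type we wish to realize along the way is indeed $K_n$-free, so that the one-point extension property and the hypothesized genericity of $\psi$ can be invoked. This comes essentially for free: the configuration we are trying to realize on the $\psi$-side is a copy of one already present on the $\phi$-side, which sits inside $\Gamma_n$ and is therefore automatically $K_n$-free. Borelness of $G \mapsto \phi_G$ follows from the fact that every back-and-forth in the construction can be performed uniformly in $G$ against the fixed enumeration of $\Gamma_n$, so the reduction is Borel as required.
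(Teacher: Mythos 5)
There is a genuine gap, and it occurs twice in the same form: your genericity hypotheses only control types over \emph{finite} sets, while both of your key steps require controlling types over the \emph{infinite} set $\im(\alpha_G)$. First, the existence of $\phi_G$: since $\phi_G$ is required to fix $\im(\alpha_G)$ pointwise, for any vertex $a\notin\im(\alpha_G)$ the image $\phi_G(a)$ must have exactly the same trace (neighborhood) on the infinite set $\im(\alpha_G)$ as $a$ does. Nothing in your ``perfection'' property (every $K_n$-free one-point extension over a \emph{finite} $S\subseteq\im(\alpha_G)$ realized infinitely often outside the image) guarantees that any other vertex realizes that infinite trace; in a fixed copy of $\Gamma_n$ a vertex's trace on an infinite set may well be realized uniquely, in which case every automorphism fixing $\im(\alpha_G)$ pointwise must fix that vertex too, and your fixed-point-free action with $\ZZ$-orbits simply does not exist. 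Second, the conjugacy lemma: in your back-and-forth the new vertex $a$ must be matched to a vertex $b$ whose adjacencies agree not only over the finitely many already-matched moving points but over the \emph{entire} image $\theta(\mathrm{Fix}(\phi))$ of the infinite fixed-point set, because the conjugating map is a graph isomorphism extending $\theta$. ``$\psi$ realizes every allowed $K_n$-free $\ZZ$-extension over each finite subset infinitely often'' does not produce such a $b$: that is saturation over finite parameter sets, and there are continuum many possible types over the infinite fixed-point set but only countably many orbits available.

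The way the paper escapes this problem is worth noting: it does not embed $G$ into a pre-existing $\Gamma_n$ and appeal to genericity over an infinite base. Instead it builds the pair $(\Delta_G,\phi_G)$ from $G$ in canonical layers (a tagged double cover of $G$ at level $0$, then at each stage one new vertex for each finite $K_{n-1}$-free subset of the previous level, adjacent to exactly that subset), so that an isomorphism $G\cong G'$ extends \emph{uniquely}, level by level, to an isomorphism $\Delta_G\cong\Delta_{G'}$ conjugating the automorphisms; no type over an infinite set ever needs to be realized, and no back-and-forth with choices is needed for the forward direction. Your underlying idea of coding $G$ as the fixed-point set (rather than the paper's ``adjacent to its image'' tag and quotient) can be made to work, but only if you likewise construct $\Delta_G$ around $G$ by such a canonical layered procedure (adding free $\ZZ$-orbits of new witnesses at each stage) rather than working inside a fixed copy of $\Gamma_n$ and invoking a homogeneity-over-an-infinite-set lemma that is not available.
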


\begin{proof}
  Given a countable $K_n$-free graph $G$, we construct a copy $\Delta_G$ of $\Gamma_n$ together with an automorphism $\phi_G$ of $\Delta_G$. It is enough to show that $G\cong G'$ iff $\phi_G$ and $\phi_{G'}$ are conjugate by an isomorphism $\Delta_G\cong\Delta_{G'}$.

  To begin, let $\Delta_G^0$ consist of two disjoint copies of $G$, with each vertex adjacent to the corresponding vertex in the other copy. Also, let $\phi_G^0$ be the automorphism of $\Delta_G^0$ which exchanges corresponding vertices from the two copies of $G$.

  Next suppose $\Delta_G^k$ and $\phi_G^k$ have been constructed and define $\Delta_G^{k+1}\supset\Delta_G^k$ as follows. For each finite subset $S\subset \Delta_G^k$ which does not contain a copy of $K_{n-1}$, we place a point $x$ into $\Delta_G^{k+1}$ which is adjacent to every vertex of $S$ and no other vertices in $\Delta_G^{k+1}$. Then let $\phi_G^{k+1}$ be the unique extension of $\phi_G^k$ to an automorphism of $\Delta_G^{k+1}$.

  To complete the construction, we let $\Delta_G=\bigcup\Delta_G^k$ and $\phi_G=\bigcup\phi_G^k$. It is clear that $\Delta_G$ has the one-point extension property relative to the class of $K_n$-free graphs and hence that it is a copy of $\Gamma_n$. Moreover, if $G\cong G'$ then this extends to an isomorphism $\Delta_G^0\cong\Delta_{G'}^0$, and this uniquely extends layer-by-layer to an isomorphism $\alpha\colon\Delta_G\cong\Delta_{G'}$. It is easy to verify that this isomorphism satisfies $\alpha\phi_G=\phi_{G'}\alpha$.

  For the converse, first note from the construction that if $x$ lies in $\Delta_G^0$ then $x$ is adjacent to $\phi_G(x)$, while if $x$ lies in some $\Delta_G^{k+1}\smallsetminus\Delta_G^k$ then so does $\phi_G(x)$ and hence $x$ is \emph{not} adjacent to $\phi_G(x)$. Thus if we are given $\phi_G$ we can recover $\Delta_G^0$ as the set of vertices $x$ such that $x$ is adjacent to $\phi_G(x)$. And we can further recover $G$ as the quotient graph of $\Delta_G^0$ by the orbit equivalence relation on $\phi_G$.

  Now if $\alpha\colon\Delta_G\cong\Delta_{G'}$ and $\alpha\phi_G=\phi_{G'}\alpha$ it follows that $\alpha$ restricts to an isomorphism $\Delta_G^0\cong\Delta_{G'}^0$ that sends $\phi_G$-orbits to $\phi_{G'}$-orbits. Therefore by passing to the quotient graphs of $\Delta_G^0,\Delta_{G'}^0$ by the $\phi_G$ and $\phi_{G'}$-orbit equivalence relations, we see that $\alpha$ induces an isomorphism  $G\cong G'$.

  To conclude, we remark briefly on how the construction can be exhibited in a Borel fashion. We fix the underlying sets of $G,\Delta_G,\Gamma_n$ to be $\NN$. The construction of $\Delta_G$ can be made Borel by reserving an infinite subset $I_k\subset\NN$ for each $\Delta_G^k$, and using a previously fixed enumeration of the finite subsets $S\subset I_k$. This immediately implies that the construction of $\phi_G$ is Borel also. Finally we can regard $\phi_G$ as an automorphism of $\Gamma_n$ using a back-and-forth construction between $\Delta_G$ and $\Gamma_n$, where each choice in the construction is resolved by choosing the least available witness.
\end{proof}

\section{Digraphs}
\label{sec:Digraphs}

For us, a \emph{digraph} is an antisymmetric and irreflexive binary relation.
The countable homogeneous digraphs have been classified by Cherlin \cite{cherlin}. The following catalog of these digraphs also serves as a table of contents for this section.

\begin{itemize}
\item We have already mentioned $\QQ$, $S(2)$, and $S(3)$, which can all be viewed as digraphs (sections~\ref{sec:linear} and~\ref{sec:local})
\item The generic tournament $\mathcal T$ (section~\ref{sec:tournament})
\item Generic independent set avoiding digraphs $\Lambda_n$ (section~\ref{sec:tournament})
\item Compositions of certain tournaments with $I_n$ (section~\ref{sec:composite})
\item Slight modifications of certain tournaments $\hat T$ (section~\ref{sec:hat})
\item Generic tournament-avoiding digraphs $\Gamma_{\mathcal F}$ (section~\ref{sec:Tfree})
\item Generic complete multipartite digraphs (section~\ref{sec:multipartite})
\item Semigeneric multipartite digraph \cite{cherlin-imprimitive} (not treated) 
\item Generic partial order $\mathcal P$ (not treated)
\item Shuffled generic partial order $P(3)$ (not treated)
\end{itemize}

There are also several finite examples, but the conjugacy problems for their automorphism groups are all clearly smooth.

\subsection{The random tournament and universal $I_n$-free digraphs.}
\label{sec:tournament}

There is a generic countable tournament $\mathcal T$, sometimes also called the \emph{random tournament}.

\begin{thm}\label{thm:tournament}
  The isomorphism relation for countable linear orders is Borel reducible to the conjugacy problem for $\Aut(\mathcal T)$. Hence the conjugacy problem for $\Aut(\mathcal T)$ is Borel complete.
\end{thm}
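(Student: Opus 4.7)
The plan is to adapt the two-step template of Theorem~\ref{thm:graphs}: given a countable tournament $T$, produce a copy $\Delta_T\cong\mathcal{T}$ together with an automorphism $\phi_T\in\Aut(\Delta_T)$ from which $T$ can be recovered up to isomorphism. The first observation is that the ``two disjoint copies of $T$ swapped by $\phi_T$'' starting point used in the graph setting is unavailable here: if $\phi\in\Aut(\mathcal{T})$ is an involution and $\phi(x)\neq x$, then the unique edge between $x$ and $\phi(x)$ is sent by $\phi$ to itself with reversed orientation, contradicting antisymmetry. Consequently every nontrivial automorphism of a tournament has all nontrivial orbits of size at least $3$. The plan is therefore to encode $T$ as the fixed-point set of $\phi_T$ rather than as its orbit-quotient, and to take $\phi_T$ to have order $3$ on its support.

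Concretely, I would set $\Delta_T^0=T$ with $\phi_T^0=\mathrm{id}_T$ and build $\Delta_T^{k+1}\supset\Delta_T^k$ inductively. At stage $k+1$, for every pair $(S,\sigma)$ with $S\subset\Delta_T^k$ finite and $\sigma\colon S\to\{+,-\}$, introduce a triple of new vertices $x_0,x_1,x_2$ carrying the cyclic $3$-tournament $x_0\to x_1\to x_2\to x_0$, and extend $\phi_T$ by $x_i\mapsto x_{i+1\bmod 3}$. The edges from $x_0$ to $S$ are assigned to realize $\sigma$, the edges from $x_0$ to $\Delta_T^k\smallsetminus S$ are fixed by a default convention (for instance, all arrows out), and the edges of $x_1,x_2$ to the rest of $\Delta_T^k$, as well as the edges between any two new triples, are then forced by a canonical $\phi$-equivariant rule. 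Finally set $\Delta_T=\bigcup_k\Delta_T^k$ and $\phi_T=\bigcup_k\phi_T^k$. Borelness of $T\mapsto\phi_T$ is arranged as in the proof of Theorem~\ref{thm:graphs}, by reserving infinite subsets of $\NN$ for the new vertices at each stage and enumerating the data $(S,\sigma)$ in advance.

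Four points then need verification. First, $\Delta_T$ has the one-point extension property for finite tournaments, so $\Delta_T\cong\mathcal{T}$. Second, $\mathrm{Fix}(\phi_T)=T$, because every vertex outside $T$ sits in a proper $3$-orbit by design. Third, any isomorphism $f\colon T\to T'$ extends stage-by-stage to an isomorphism $F\colon\Delta_T\to\Delta_{T'}$ via the rule ``the triple over $(S,\sigma)$ maps to the triple over $(F(S),\sigma\circ F^{-1})$'', and one checks that $F\phi_T=\phi_{T'}F$. Fourth, conversely, any $F\in\Aut(\mathcal{T})$ conjugating $\phi_T$ to $\phi_{T'}$ preserves fixed-point sets, hence restricts to an isomorphism $T\cong T'$.

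The main obstacle will be the bookkeeping in the first item: the $\phi$-equivariant choice of default edges between the proliferating new triples must realize enough one-point extension types to make $\Delta_T$ a copy of $\mathcal{T}$ while introducing no accidental fixed points and remaining compatible with the stage-by-stage extension of $f$ in item three. This should be manageable via a careful Fra\"iss\'e-style back-and-forth, essentially as in Theorem~\ref{thm:graphs}.
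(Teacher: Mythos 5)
Your proposal is correct in outline and follows the same general template as the paper (an inductive construction of a copy of $\mathcal T$ realizing all one-point extensions equivariantly, with new witnesses placed in $3$-cycles), but your coding device is genuinely different. The paper does not use fixed points: it starts from \emph{three} twisted copies of $T$, blowing each vertex $x$ up to a $3$-cycle $x_0\to x_1\to x_2\to x_0$ cycled by $\phi_T$ so that $x_i\to\phi_T(x_i)$, while every later witness $x$ is put in an orbit with $x\leftarrow\phi_T(x)$; the base is then recovered as $\set{x : x\to\phi_T(x)}$ and $T$ as its quotient by the $\phi_T$-orbits. (Indeed, in the paper's construction $\phi_T$ \emph{does} acquire fixed points, namely witnesses over $\phi_T$-invariant finite sets, so it is the edge-direction marker, not fixedness, that distinguishes the base.) You instead keep $T$ pointwise fixed, place every new witness in a free $3$-orbit, and recover $T$ as $\mathrm{Fix}(\phi_T)$; your diagnosis of why the two-copy involution trick from Theorem~\ref{thm:graphs} fails for tournaments is exactly right, and your recovery step is simpler (no orbit quotient), in the spirit of the $\Aut(\QQ)$ argument of Theorem~\ref{thm:foreman}. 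Two small points. First, ``forced by a canonical $\phi$-equivariant rule'' overstates the case for edges \emph{between} two new triples: equivariance only makes the orientation of the edge from $x_i$ to $y_j$ depend on $j-i \bmod 3$, so a convention must still be chosen for each pair of new orbits; this is exactly the paper's ``systematic choice'' via a fixed well-order of the index sets, and, as in the paper, one must arrange that this convention is compatible with the stage-by-stage extension of an isomorphism $T\cong T'$ --- the bookkeeping you correctly flag as the main obstacle. Second, your worry that the default edges might threaten genericity is unfounded: the one-point extension property over any finite $F\subset\Delta_T^k$ is already witnessed by the stage-$(k+1)$ vertices indexed by $(F,\sigma)$, independently of how the inter-triple edges are completed, and no new fixed points arise since all new orbits have size $3$. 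With those points handled, your argument yields the same reduction as the paper's.
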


\begin{proof}
  We employ a similar method to the proof of Theorem~\ref{thm:graphs}, adapting some of the combinatorial details to this situation. Beginning with a linear order $L$ we again construct a copy $\Delta_L$ of $\mathcal T$ together with an automorphism $\phi_L$ of $\Delta_L$ in such a way that $L\cong L'$ iff $\phi_L$ is conjugate to $\phi_{L'}$. As before, the construction can easily be arranged to be Borel.

  To begin, we let $\Delta_L^0$ consist of \emph{three} copies of $L$, where for each vertex $x\in L$ we place three vertices $x_0,x_1,x_2$ into $\Delta_L^0$ with $x_0\to x_1\to x_2\to x_0$. For each edge $x\to y$ of $L$ we place the nine edges $x_i\to y_j$ into $\Delta_L^0$. We then let $\phi_L^0$ be the automorphism of $\Delta_L^0$ that maps the vertices of $\Delta_L^0$ in the fashion $x_0\mapsto x_1\mapsto x_2\mapsto x_0$ so that in all cases $x_i\to \phi_L^0(x_i)$. Finally we extend the linear ordering of $L$ to an ordering $<_L^0$ of $\Delta_L^0$ by letting $x_0<x_1<x_2<x_3$ and $x_i<y_j$ whenever $x<y$ in $L$.

  Now suppose that $\Delta_L^k$, $\phi_L^k$ , and $<_L^k$ have been constructed and define $\Delta_L^{k+1}$ as follows. For each finite subset $S\subset\Delta_L^k$ we place a vertex $x$ into $\Delta_L^{k+1}$ such that $s\to x$ for all $s\in S$ and $a\leftarrow x$ for all $a\in\Delta_L^k\smallsetminus S$. Then there is a unique automorphism $\phi_L^{k+1}$ of $\Delta_L^{k+1}$ which extends $\phi_L^k$. We also extend the linear order $<_L^k$ to $<_L^{k+1}$ as follows: if $x,x'$ are the vertices corresponding to the finite sets $S,S'$, then we set $x<_L^{k+1}x'$ iff $S<S'$ in the lexicographic order on finite sets derived from $<_L^k$.

  We still need to add edges within $\Delta_L^{k+1}\smallsetminus\Delta_L^k$ to make $\Delta_L^{k+1}$ a tournament. First, within each nontrivial $\phi_L^{k+1}$-orbit of $\Delta_L^{k+1}\smallsetminus\Delta_L^k$ we make a copy of $C_3$ by adding the edges $x\leftarrow\phi_L^{k+1}(x)$.  Second, if $\{x_i\}$ and $\{y_j\}$ are distinct $\phi_L^{k+1}$-orbits within $\Delta_L^{k+1}\smallsetminus\Delta_L^k$, we either add all the edges $x_i\to y_j$ or all the edges $x_i\leftarrow y_j$. This choice can be made systematic: if $\min\{x_i\}<\min\{y_j\}$ in the $<_L^{k+1}$ ordering, then we set $x_i\to y_j$.


  It is easy to see that if $L\cong L'$ then $\phi_L$ is conjugate to $\phi_{L'}$. Indeed, if $\alpha$ is an isomorphism $L\cong L'$, then by induction $\alpha$ induces an isomorphism $\Delta_L^k\cong\Delta_{L'}^k$ for each $k$, and this induced isomorphism conjugates $\phi_L$ to $\phi_{L'}$. Moreover, given $\phi_L$, we can recover $\Delta_L^0$ as the set of vertices $x$ such that $x\to\phi_L(x)$. It follows that we can conclude exactly as in the proof of Theorem~\ref{thm:graphs}.
\end{proof}

Just as the random graph $\Gamma$ admitted a family of $K_n$-free generalizations $\Gamma_n$, the random tournament $\mathcal T$ admits a family of $I_n$-free generalizations $\Lambda_n$. (Here, recall that $I_n$ denotes an edgeless digraph with $n$ vertices.) With this notation, $\Lambda_2$ is just $\mathcal{T}$ itself.

\begin{thm}\label{thm:I_n-free digraphs}
  Let $n\geq2$. The isomorphism relation for countable linear orders is Borel reducible to the conjugacy problem for $\Aut(\Lambda_n)$. Hence the conjugacy problem for $\Aut(\Lambda_n)$ is Borel complete.
\end{thm}

\begin{proof}
  We explain how to modify the previous proof to work for this family of digraphs. Once again suppose that $L$ is a linear order and that $\Delta_L^k$, $\phi_L^k$, and $<_L^k$ have been constructed. This time, for each pair of disjoint finite subsets $S,S'\subset\Delta_L^k$ such that $S'$ does not contain an independent set of size $n-1$, we add a vertex $x$ to $\Delta_L^{k+1}$ such that $s\to x$ for all $s\in S$, $s$ is not adjacent to $x$ for all $s\in S'$, and $a\leftarrow x$ for all $a\in\Delta_L^k\smallsetminus(S\cup S')$. In this way we realize all types over $\Delta_L^k$ that do not violate the $I_n$-free property. The rest of the construction proceeds as in the previous proof, except of course we define $<_L^{k+1}$ using the lexicographic ordering on pairs $(S,S')$.

  The remainder of the argument is the same as before. We can argue similarly that $\Delta_L$ is a copy of $\Lambda_n$, $\phi_L$ is an automorphism of $\Delta_L$, and the map $L\mapsto\phi_L$ gives a Borel reduction from isomorphism of linear orders to conjugacy in $\Aut(\Delta_L)=\Aut(\Lambda_n)$.
\end{proof}

\subsection{Composite digraphs}
\label{sec:composite}

For any digraph $G$ and $n\leq\infty$, we let $n\cdot G$ denote the digraph with $n$ disjoint copies of $G$. We also let $G[n]$ denote $G$ with each vertex replaced by an independent set of size $n$, where the edges between the independent sets are determined by the edges of $G$. Then there are eight classes of homogeneous composite digraphs:

\begin{itemize}
\item $\infty\cdot C_3$,\ \ \ $C_3[\infty]$
\item $n\cdot\QQ$,\ \ \ $\QQ[n]$
\item $n\cdot S(2)$,\ \ \ $S(2)[n]$
\item $n\cdot\mathcal T$,\ \ \ $\mathcal T[n]$
\end{itemize}

The following result settles the complexity of the conjugacy problem for the automorphism groups of each of these digraphs.

\begin{thm}
  \begin{itemize}
  \item The conjugacy problems for $\Aut(\infty\cdot C_3)$ and $\Aut(C_3[\infty])$ are both smooth.
  \item The conjugacy problems for the remaining digraphs in the list above are all Borel complete. Indeed, if $G$ is a tournament and the conjugacy problem for $\Aut(G)$ is Borel complete, then the conjugacy problems for $\Aut(n\cdot G)$ and $\Aut(G[n])$ are Borel complete.
  \end{itemize}
\end{thm}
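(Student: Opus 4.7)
I would address the smoothness and Borel completeness claims separately.

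For smoothness, I would adapt the invariant-counting technique of Theorem~\ref{thm:Gn}. An automorphism $\phi \in \Aut(\infty \cdot C_3)$ permutes the copies of $C_3$, and each finite $k$-cycle of copies carries a twist type from the three-element set of conjugacy classes of $\Aut(C_3) \cong \ZZ/3\ZZ$. As in Theorem~\ref{thm:Gn}, the complete conjugacy invariant is the function counting, for each pair $(k, t)$ with $k < \infty$, the number of $k$-cycles of copies of $C_3$ with twist type $t$, together with the count of infinite cycles of copies; this takes values in a standard Borel space, so conjugacy is smooth. For $\Aut(C_3[\infty])$, the maximal independent sets $V_0, V_1, V_2$ are canonical in the digraph (equivalence classes under non-adjacency), and since reversing their cyclic order would reverse edge directions, one has $\Aut(C_3[\infty]) \cong S_\infty \wr \ZZ/3\ZZ$. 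If $\phi$ preserves each $V_i$, its conjugacy class is determined by the $\ZZ/3\ZZ$-orbit of the ordered triple of cycle types of $\phi \upharpoonright V_i$; if $\phi$ cyclically rotates the $V_i$, its class is determined by the rotation orientation and the cycle type of $\phi^3 \upharpoonright V_0$. Each invariant is captured by a real number modulo a finite group action, so the classification remains smooth.

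For Borel completeness of the remaining six cases, Theorems~\ref{thm:foreman}, \ref{thm:Sn}, and \ref{thm:tournament} establish Borel completeness of conjugacy in $\Aut(\QQ)$, $\Aut(S(2))$, and $\Aut(\mathcal T)$, so it suffices to prove the ``indeed'' clause. For $\Aut(n \cdot G)$ with $n \geq 2$, I would send $\phi \in \Aut(G)$ to the automorphism $\psi_\phi$ of $n \cdot G$ acting as $\phi$ on the first copy and as the identity on the others. Any $\theta \in \Aut(n \cdot G)$ permutes copies and acts within each copy by an element of $\Aut(G)$, so $\theta \psi_\phi \theta^{-1}$ has identity action on all but one copy and an $\Aut(G)$-conjugate of $\phi$ on that distinguished copy; hence $\psi_\phi, \psi_{\phi'}$ are conjugate in $\Aut(n \cdot G)$ if and only if $\phi, \phi'$ are conjugate in $\Aut(G)$. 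For $\Aut(G[n])$, I would fix a labeling $V_v = \set{v^i : i < n}$ of the independent sets in $G[n]$ and send $\phi$ to $\tilde\phi$ with $\tilde\phi(v^i) = \phi(v)^i$. Since the sets $V_v$ are canonical, collapsing them defines a group homomorphism $\Aut(G[n]) \to \Aut(G)$, $\eta \mapsto \bar\eta$. Then $\tilde\phi = \eta \tilde{\phi'} \eta^{-1}$ forces $\phi = \bar\eta \phi' \bar\eta^{-1}$, while any $\Aut(G)$-conjugacy $\phi = \theta \phi' \theta^{-1}$ lifts to $\tilde\phi = \tilde\theta \tilde{\phi'} \tilde\theta^{-1}$. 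Both maps are plainly Borel.

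The main technical subtlety lies in the smoothness analysis for $\Aut(C_3[\infty])$, where cycle types in $S_\infty$ form an uncountable standard Borel space but the $\ZZ/3\ZZ$-orbits on triples of such cycle types still produce a smooth equivalence relation because finite group actions on standard Borel spaces always yield smooth orbit equivalence relations. The Borel completeness reductions are otherwise direct embeddings, and the key check is that the canonical structure — the identity-padded copies for $n \cdot G$, and the canonical partition for $G[n]$ — allows one to recover the $\Aut(G)$-conjugacy class of $\phi$ from the $\Aut(n\cdot G)$- or $\Aut(G[n])$-conjugacy class of its image.
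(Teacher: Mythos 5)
Your proposal is correct and follows essentially the same route as the paper: the twist-type counting invariants of Theorem~\ref{thm:Gn} for the two smooth cases, and the reductions $\phi\mapsto\phi\oplus\mathrm{id}$ and $\phi\mapsto\phi[n]$ (using connectedness of a tournament for $n\cdot G$ and canonicity of the independent-set partition for $G[n]$) for Borel completeness. Your treatment of $\Aut(C_3[\infty])$ via the wreath product $S_\infty\wr\ZZ/3\ZZ$, with the cyclic-orbit-of-triples invariant in the block-preserving case, is just a more explicit rendering of the paper's brief ``twist type'' sketch.
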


\begin{proof}
  To show that $\Aut(\infty\cdot C_3)$ is smooth, we can use an argument identical to the one in Theorem~\ref{thm:Gn}. Here, the ``twist types'' are simply the three elements of $\Aut(C_3)$. The argument for $\Aut(C_3[\infty])$ is similar, since any element of $\Aut(C_3[\infty])$ acts on the copies of $I_\infty$ by an automorphism of $C_3$. And as with the previous argument, each cycle of copies of $I_\infty$ has an associated ``twist type'' which is a conjugacy class of $S_\infty=\Aut(I_\infty)$.

  Next, if conjugacy in $\Aut(G)$ is Borel complete, let $\phi\oplus\mathrm{id}$ denote the automorphism of $n\cdot G$ which acts by $\phi$ on the first copy of $G$ and trivially on the remaining copies. Then it is easy to see that since $G$ is connected, the map $\phi\mapsto\phi\oplus\mathrm{id}$ is a reduction from conjugacy in $\Aut(G)$ to conjugacy in $\Aut(n\cdot G)$.

  Finally, we let $\phi[n]$ denote the automorphism of $G[n]$ which acts by $\phi$ on the copies of $I_n$ and acts trivially within copies of $I_n$. Once again, it is easy to check that since $G$ is a tournament the map $\phi\mapsto\phi[n]$ is a reduction from conjugacy in $\Aut(G)$ to conjugacy in $\Aut(G[n])$.
\end{proof}

We conjuncture that the above result may be strengthened, either by weakening the hypotheses on the digraph $G$ or by generalizing it to a larger class of countable structures.

\subsection{Hat graphs}
\label{sec:hat}

Given a tournament $T$, we define $\hat T$ as follows: let $a$ be a new point and let $\hat T$ initially consist of two disjoint copies of $a\to T$, call them $a\to T$ and $\bar{a}\to\bar{T}$. Given points $x\in T\cup\{a\}$ and $y\in\bar{T}\cup\{\bar{a}\}$, we let $x\to\bar{y}$ if $x\leftarrow y$ and $x\leftarrow\bar{y}$ if $x\to y$.

The automorphism group of $\hat T$ is generated by $\Aut(T)$ together with a rather trivial automorphism swapping the two copies.  If $T$ is infinite, then the digraph $\hat T$ is homogeneous in only two cases: $T=\QQ$ and $T=\mathcal{T}$.  In each of these cases, the conjugacy relation is Borel complete, and it follows that the conjugacy relation in $\hat T$ is also Borel complete.

\subsection{Generic tournament-avoiding digraphs}
\label{sec:Tfree}

While the random graph $\Gamma$ had generic $K_n$-free variants $\Gamma_n$, the generic countable digraph has a family of continuum many variants. For any family $\mathcal{F}$ of finite tournaments (each of size $\geq3$), we say that a digraph $G$ is \emph{$\mathcal{F}$-free} if it does not contain a copy of any element of $\mathcal F$. For each such family $\mathcal{F}$ there is a universal countable homogeneous such digraph $\Gamma_{\mathcal{F}}$. In the case that $\mathcal{F}=\emptyset$, the resulting digraph $\Gamma_{\mathcal{F}}$ is called the \emph{random digraph}.

\begin{thm}\label{thm:digraphs}
  If $\mathcal{F}$ is a family of finite tournaments, each of size $\geq3$, then the isomorphism problem for the class of $\mathcal F$-free digraphs is Borel reducible to the conjugacy problem for $\Aut(\Gamma_{\mathcal{F}})$.  Hence the conjugacy problem for $\Aut(\Gamma_{\mathcal{F}})$ is Borel complete.
\end{thm}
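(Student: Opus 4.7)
The plan is to adapt the doubling-plus-extension method of Theorem~\ref{thm:graphs} to the $\mathcal{F}$-free digraph setting. Given a countable $\mathcal{F}$-free digraph $G$, I will construct a copy $\Delta_G$ of $\Gamma_{\mathcal F}$ together with a distinguished automorphism $\phi_G$ so that $G\cong G'$ iff $\phi_G$ and $\phi_{G'}$ are conjugate in $\Aut(\Gamma_\mathcal{F})$. As in Theorem~\ref{thm:graphs}, the whole strategy rests on arranging that the base layer $\Delta_G^0$ is recoverable from $\phi_G$ as $\set{x:x\to\phi_G(x)}$, and that $G$ itself is then recoverable as the quotient of $\Delta_G^0$ by the $\phi_G$-orbit relation (after discarding self-loops).

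The first and, I expect, main obstacle is that neither of the previously-used base structures can serve here. The two-copy swap of Theorem~\ref{thm:graphs} would force $\phi^2=\mathrm{id}$, which is incompatible with $x\to\phi(x)$ holding throughout a digraph, while the three-copy $C_3$ construction of Theorems~\ref{thm:tournament} and~\ref{thm:I_n-free digraphs} embeds a $3$-vertex tournament into $\Delta_G^0$ that could lie in $\mathcal{F}$. My proposed fix is to use $\ZZ$-indexed columns: let $\Delta_G^0=\set{x_i:x\in G,\ i\in\ZZ}$ with edges $x_i\to x_{i+1}$ in each column, edges $x_i\to y_i$ in each layer whenever $x\to y$ in $G$, and all other pairs non-adjacent; let $\phi_G^0$ be the shift $x_i\mapsto x_{i+1}$. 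A direct case analysis shows that any tournament in $\Delta_G^0$ either lies in a single layer, and so is a subtournament of $G$, or lies in a single column and has size at most two; hence $\Delta_G^0$ is $\mathcal{F}$-free, and $x\to\phi_G^0(x)$ throughout.

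The extension step then follows the template of Theorems~\ref{thm:graphs} and~\ref{thm:I_n-free digraphs}. Given $\Delta_G^k$ and $\phi_G^k$, for each finite $S\subset\Delta_G^k$ and each assignment $\tau$ sending each $s\in S$ to one of ``$s\to x$'', ``$x\to s$'', or ``non-adjacent'' such that adjoining $x$ with prescribed type $\tau$ on $S$ and non-adjacent to $\Delta_G^k\smallsetminus S$ keeps the structure $\mathcal{F}$-free, I adjoin an entire new $\phi$-orbit $\set{y_i:i\in\ZZ}$ to $\Delta_G^{k+1}$, where $y_0$ realizes $\tau$ and $y_i$ realizes the $\phi^i$-translate of $\tau$. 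I declare same-orbit members pairwise non-adjacent, and members of distinct new orbits pairwise non-adjacent. Any forbidden tournament in $\Delta_G^{k+1}$ must be pairwise adjacent; since my construction leaves any two new vertices non-adjacent, such a tournament could contain at most one new vertex, and this case is exactly what the consistency of $\tau$ rules out. In the limit $\Delta_G=\bigcup_k\Delta_G^k$ with $\phi_G=\bigcup_k\phi_G^k$, every consistent one-point type over every finite subset is realized at some finite stage, so $\Delta_G\cong\Gamma_\mathcal{F}$, and every vertex added after stage $0$ satisfies $x\not\sim\phi_G(x)$.

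The two reduction directions then follow the pattern of Theorem~\ref{thm:graphs}. Any isomorphism $G\cong G'$ extends canonically to $\Delta_G^0\cong\Delta_{G'}^0$ and lifts layer-by-layer, matching new orbits via a fixed enumeration of types, to an isomorphism $\alpha\colon\Delta_G\to\Delta_{G'}$ conjugating $\phi_G$ to $\phi_{G'}$. Conversely, any such $\alpha$ sends $\Delta_G^0$ to $\Delta_{G'}^0$ and respects $\phi$-orbits, hence descends to an isomorphism of the orbit-quotients, which are $G$ and $G'$. Finally, fixing the underlying set $\NN$, a well-ordering of the finite types, and a back-and-forth against a fixed copy of $\Gamma_\mathcal{F}$ with least-witness choices, the assignment $G\mapsto\phi_G$ becomes Borel in exactly the manner of the closing remarks of Theorem~\ref{thm:graphs}.
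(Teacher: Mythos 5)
Your proof is correct, and it follows the paper's overall strategy exactly: build $\Delta_G\cong\Gamma_{\mathcal F}$ in layers over a base $\Delta_G^0$ made of $\phi$-orbit ``columns'' over $G$, arrange that $x\to\phi_G(x)$ holds precisely on $\Delta_G^0$, and recover $G$ as the orbit quotient of that set. The implementation differs in two places. For the base, the danger you identify (a $3$-element tournament such as $C_3$ could lie in $\mathcal F$) is exactly what the paper guards against, but it does so by using \emph{four} copies of $G$ with columns forming copies of $C_4$, rather than your $\ZZ$-indexed columns; both choices ensure that every tournament of size $\geq3$ in $\Delta_G^0$ lies in a single layer, so either works. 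For the extension step, the paper imitates the tournament construction of Theorem~\ref{thm:tournament}: each new vertex gets in-edges from a chosen finite $S\subset\Delta_G^k$ and out-edges to all of $\Delta_G^k\smallsetminus S$, vertices are simply skipped when adding them would create a member of $\mathcal F$, and the new nontrivial orbits are made into copies of $C_4$ with $x\leftarrow\phi(x)$. You instead realize every consistent three-valued one-point type over a finite set (non-adjacent off $S$) and leave all new vertices pairwise non-adjacent; this is closer in spirit to the $I_n$-free construction of Theorem~\ref{thm:I_n-free digraphs}, and it makes the verification that $\Delta_G$ has the one-point extension property for $\mathcal F$-free digraphs --- including types that demand non-adjacencies --- completely transparent, whereas in the paper's version the realization of such types is less immediate. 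The only point you leave implicit is that the $\phi^i$-translate of a consistent type is again consistent, which holds because $\phi_G^k$ is an automorphism of $\Delta_G^k$; with that one remark your extension step, the recovery of $\Delta_G^0$ as $\set{x: x\to\phi_G(x)}$, the quotient argument (your note about discarding loops is the right care to take), and the Borelness discussion all go through as in Theorems~\ref{thm:graphs} and~\ref{thm:tournament}.
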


\begin{proof}
  We combine the arguments in the proofs of Theorems~\ref{thm:graphs} and \ref{thm:tournament}. Given a countable $\mathcal{F}$-avoiding digraph $G$ we construct a copy $\Delta_G$ of $\Gamma_{\mathcal F}$ and an automorphism $\phi_G$ of $\Delta_G$. This time we let $\Delta_G^0$ consist of \emph{four} copies of $G$, where for each $x\in G$ we place vertices $x_0,\ldots,x_3$ into $\Delta_G^0$ with $x_0\to x_1\to x_2\to x_3\to x_0$. For each edge $x\to y$ in $G$ we place the four edges $x_i\to y_i$ into $\Delta_G^0$. Note that the only tournaments in $\Delta_G^0$ are those already present in $G$. We then let $\phi_G^0$ be the automorphism of $\Delta_G^0$ that maps $x_0\mapsto\cdots\mapsto x_3\mapsto x_0$ so that in all cases $x_i\to\phi_G^0(x_i)$.

  Now suppose that $\Delta_G^k$ and $\phi_G^k$ have been constructed and define $\Delta_G^{k+1}$ and $\phi_G^{k+1}$ as follows. For each finite subset $S\subset\Delta_G^k$ we provisionally place a vertex $x$ into $\Delta_G^{k+1}$ such that $s\to x$ for all $s\in S$ and $a\leftarrow x$ for all $a\in\Delta_G^k\smallsetminus S$. However, if doing so would create a copy of some $T\in\mathcal{F}$, we simply skip adding the element $x$ instead. As in the proof of Theorem~\ref{thm:tournament}, we let $\phi_G^{k+1}\supset\phi_G^k$ be the unique extension to an automorphism of $\Delta_G^{k+1}$, and add edges within the $\phi_G^{k+1}$-orbits of $\Delta_G^{k+1}\smallsetminus\Delta_G^k$ in such a way that each nontrivial orbit is a copy of $C_4$ where $x\leftarrow\phi_G^{k+1}(x)$. We don't add edges between the orbits. Otherwise the conclusion of the proof is now the same as in the proof of Theorem~\ref{thm:tournament}.
\end{proof}

\subsection{Generic complete $n$-partite digraphs}
\label{sec:multipartite}

A digraph is said to be \emph{complete $n$-partite} if it is $n$-partite and has a maximal set of edges.  For each $2\leq n\leq\infty$, there exists a generic such digraph, which we denote $n*I_\infty$.

\begin{thm}
  The isomorphism relation for countable linear orders is Borel reducible to the conjugacy problem for $\Aut(n*I_\infty)$. Hence the conjugacy problem for $\Aut(n*I_\infty)$ is Borel complete.
\end{thm}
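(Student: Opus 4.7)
The plan is to adapt the template of Theorems~\ref{thm:graphs}, \ref{thm:tournament}, and \ref{thm:digraphs}: for each countable tournament $T$, construct a copy $\Delta_T$ of $n*I_\infty$ together with an automorphism $\phi_T\in\Aut(\Delta_T)$ such that $T\cong T'$ iff $\phi_T$ and $\phi_{T'}$ are conjugate in $\Aut(n*I_\infty)$, with $T\mapsto\phi_T$ Borel. Appealing to the Borel completeness of isomorphism on countable tournaments then gives the result.

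The principal design step is the initial layer $\Delta_T^0$, which must encode $T$ while respecting the $n$-partite structure. Set $m=3$ if $n\geq 3$ and $m=4$ if $n=2$. For each $x\in T$, place $m$ copies $x_0,\ldots,x_{m-1}$ into $\Delta_T^0$, with $x_i$ in part $i\bmod\min(m,n)$, so that consecutive copies lie in distinct parts. Within each copy-set include the directed $m$-cycle $x_0\to x_1\to\cdots\to x_{m-1}\to x_0$; for distinct $x,y\in T$ with $x\to y$, include all edges $x_i\to y_j$ for which $x_i$ and $y_j$ lie in distinct parts. Let $\phi_T^0$ be the shift $x_i\mapsto x_{i+1\bmod m}$, which cyclically permutes the first $m$ (or $2$, when $n=2$) parts and fixes the remaining parts setwise. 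Next build $\Delta_T^{k+1}$ by adding, for each one-point extension type over $\Delta_T^k$ in the Fra\"iss\'e class of finite complete $n$-partite digraphs, a realizing vertex, and extend $\phi_T^k$ to $\phi_T^{k+1}$ so that each new orbit is either a fixed point (available only when the vertex sits in a part $\phi_T^k$ fixes setwise) or an $m$-cycle spanning the cyclically permuted parts. As in Theorem~\ref{thm:tournament}, arrange these new $m$-cycles with the reversed orientation $\phi_T^{k+1}(x)\to x$, and resolve edge directions between distinct new orbits via a fixed well-ordering of the type-indices. Setting $\Delta_T=\bigcup\Delta_T^k$ and $\phi_T=\bigcup\phi_T^k$, the one-point extension property will give $\Delta_T\cong n*I_\infty$, and the construction is Borel by the usual bookkeeping.

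For the forward implication $T\cong T'\Rightarrow\phi_T\sim\phi_{T'}$, any isomorphism $T\cong T'$ lifts canonically to $\Delta_T^0\cong\Delta_{T'}^0$ and propagates layer-by-layer to a conjugator. For the converse, the key point is that $\Delta_T^0$ is recovered from $\phi_T$ as $\{v\in\Delta_T:v\to\phi_T(v)\}$, since both fixed points and nontrivial later-stage orbits fail this condition; the tournament $T$ is then recovered as the quotient of $\Delta_T^0$ by the $\phi_T$-orbit equivalence relation, with the quotient edge from an $x$-orbit to a $y$-orbit given by the direction of any edge between representatives in distinct parts.

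The main technical obstacle I expect is verifying that imposing the prescribed orbit structure on $\phi_T^{k+1}$ does not obstruct realization of every one-point extension type. This requires separately checking $n=2$, $3\leq n<\infty$, and $n=\infty$; the $n=2$ case will be the most delicate, because no part is fixed setwise and every new orbit must be a $4$-cycle spanning both parts, so one must argue that the freedom in how the four new vertices distribute across the two parts is enough to realize any desired type consistently with the $\phi_T^{k+1}$-action. In the other cases, either the presence of parts fixed setwise (when $n>m$) or the already-achieved symmetry (when $n=m=3$) absorbs the obstruction.
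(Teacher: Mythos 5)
Your plan is the paper's plan: encode $T$ in an initial layer made of finitely many copies of each vertex arranged in a short directed cycle, realize all one-point types stage by stage with the automorphism forced on the new vertices, orient new orbits ``backwards,'' recover $\Delta_T^0$ as $\set{x: x\to\phi_T(x)}$, and recover $T$ as the quotient by orbits. For $n=2$ your initial layer is literally the paper's (four copies split over the two parts, with the shift). For $n\geq3$ you genuinely deviate: you use three copies spread over three cyclically permuted parts, whereas the paper keeps the $n=2$ layer for every $n$ and lets $\phi_T$ swap $A_0,A_1$ while fixing the remaining parts setwise. Your variant is legitimate, and in one respect cleaner: since your $\phi_T^0$ has order $3$, an easy induction gives $(\phi_T^k)^3=\mathrm{id}$ for all $k$, so new orbits in the moved parts really are $3$-cycles, and a $3$-cycle of pairwise adjacent vertices can be coherently oriented against the action of $\phi$.

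The gap is in your description of the extension step. You do not get to \emph{choose} the orbit structure of $\phi_T^{k+1}$: once you add one realizing vertex per type, the extension is unique, and the orbit of a new vertex has exactly the length of the orbit of its type under $\phi_T^k$; accordingly your stated worry (that prescribing orbits might obstruct realizing types) is aimed at the wrong target --- realization is automatic, and it is the orbit lengths that are forced. For $n\geq3$ this only costs you the observation that $3$-cycles can also occur \emph{inside} a setwise-fixed part (harmless: such an orbit is an independent set, so no orientation is needed and $x\to\phi_T(x)$ fails automatically). For $n=2$ it is a real problem: a type invariant under $(\phi_T^k)^2$ --- e.g.\ $S=\set{x_1,x_3}$ over $\Delta_T^0$ --- yields a $2$-cycle $\set{v,\phi_T(v)}$ lying in opposite parts; these two vertices must be adjacent, and whichever way the edge is oriented, one of them satisfies $x\to\phi_T(x)$, so your claim that every new orbit is a $4$-cycle is false and the recovery set $\set{x:x\to\phi_T(x)}$ acquires points outside $\Delta_T^0$. (The paper's instruction to ensure $x\leftarrow\phi_T^{k+1}(x)$ on every new orbit runs into the same $2$-cycles.) Two standard repairs: add \emph{two} realizations of each such $(\phi_T^k)^2$-invariant type and let $\phi_T^{k+1}$ arrange the four resulting vertices into a genuine reversed $4$-cycle; or keep one realization and strengthen the recovery criterion to ``the entire orbit is forward oriented,'' i.e.\ $\phi_T^i(x)\to\phi_T^{i+1}(x)$ for all $i$, which holds on $\Delta_T^0$ and fails on every later orbit. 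With one of these repairs spelled out (and with the inter-orbit orientations chosen systematically, as you indicate), your argument goes through.
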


\begin{proof}
  We begin by treating the special case when $n=2$.  Given a linear order $L$, we build a copy $\Delta_L$ of $2*I_\infty$ and an automorphism $\phi_L$ of $\Delta_L$. We let $\Delta_L^0$ consist of four copies of $L$, where for each element $x\in L$ we place four vertices $x_0,\ldots,x_3$ into $\Delta_L^0$ with edges $x_0\to\cdots\to x_3\to x_0$. For each pair $x<y$ of $L$ we place \emph{eight} edges $x_{2i}\to y_{2j+1}$ and $x_{2i+1}\to y_{2j}$ into $\Delta_L^0$ (this is depicted in Figure~\ref{fig:bipartite}). Since $L$ is in particular a tournament, we have that $\Delta_L^0$ is a complete bipartite digraph. Next we let $\phi_L^0$ be the automorphism of $\Delta_L^0$ that maps the vertices of $\Delta_L^0$ in the fashion $x_0\mapsto\cdots\mapsto x_3\mapsto x_0$, so that in all cases we have $x_i\to\phi_L^0(x_i)$. We also extend the linear ordering of $L$ to an ordering $<_L^0$ of $\Delta_L^0$ by letting $x_0<x_1<x_2<x_3$ and $x_i<y_j$ whenever $x<y$ in $L$.

  \begin{figure}[h]
    \centering
    \begin{tikzpicture}[->]
      \node at (-4,0) {$x<y$};
      \node at (-2.5,0) {becomes};
      \node (x0) at (-1,1) {$x_0$};
      \node (x1) at (1,1) {$x_1$};
      \node (x2) at (-1,.5) {$x_2$};
      \node (x3) at (1,.5) {$x_3$};
      \node (y0) at (-1,-.5) {$y_0$};
      \node (y1) at (1,-.5) {$y_1$};
      \node (y2) at (-1,-1) {$y_2$};
      \node (y3) at (1,-1) {$y_3$};
      \draw (x0)--(x1);\draw (x1)--(x2);\draw (x2)--(x3);\draw (x3)--(x0);
      \draw (y0)--(y1);\draw (y1)--(y2);\draw (y2)--(y3);\draw (y3)--(y0);
      \draw[-,shorten >=2pt,double,double distance=2.4pt] (-.4,.4) -- (.4,-.4);
      \draw[double] (-.4,.4) -- (.4,-.4);
      \draw[-,shorten >=2pt,double,double distance=2.4pt] (.4,.4) -- (-.4,-.4);
      \draw[double] (.4,.4) -- (-.4,-.4);
    \end{tikzpicture}
    \caption{The construction of $\Delta_L^0$ from $L$.\label{fig:bipartite}}
  \end{figure}
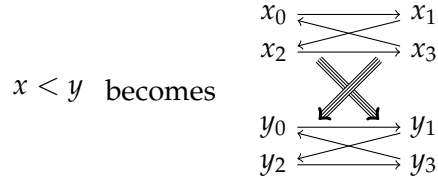

  Now suppose that $\Delta_L^k$ and $\phi_L^k$ have been constructed and inductively suppose that $\Delta_L^k$ consists of two maximal independent sets $A_0$ and $A_1$. We build extensions $\Delta_L^{k+1}\supset\Delta_L^k$ and $\phi_L^{k+1}\supset\phi_L^k$ as follows. For each $A_i$ and each finite subset $S\subset A_{1-i}$, we put a new point $x$ into $\Delta_L^{k+1}$ such that $s\to x$ for all $s\in S$ and $a\leftarrow x$ for all $a\in A_{1-i}\smallsetminus S$. As usual we let $\phi_L^{k+1}$ be the unique extension of $\phi_L^k$ to $\Delta_L^{k+1}$. We then add edges within each $\phi_L^{k+1}$-orbit in $\Delta_L^{k+1}\smallsetminus\Delta_L^k$ so as to ensure $x\leftarrow\phi_L^{k+1}(x)$ always holds.

  Finally, we fill in the remaining edges between the $\phi_L^{k+1}$-orbits in $\Delta_L^{k+1}\smallsetminus\Delta_L^k$ similarly to the proof of Theorem~\ref{thm:tournament}. More specifically, we again define a linear ordering $<_L^{k+1}$ from $<_L^k$ using the lexicographic ordering of finite sets $S$. Then if $\{x_i\}$ and $\{y_i\}$ are distinct $\phi_L^{k+1}$-orbits then we add all the edges from $\{x_i\}\cap A_0$ to $\{y_i\}\cap A_1$ and from $\{x_i\}\cap A_1$ to $\{y_i\}\cap A_0$ precisely when $\min\{x_i\}<\min\{y_i\}$. The conclusion of the proof when $n=2$ is now just the same as in the proof of Theorem~\ref{thm:tournament}.

  We now briefly say how to modify the above argument in the case when $n>2$. This time we inductively suppose that $\Delta_L^k$ and $\phi_L^k$ have been constructed and $\Delta_L^k$ consists of $n$ maximal independent sets $A_i$ for $0\leq i<n$. (In the step $k=0$, the $A_i$ will be empty for $2\leq i<n$.) We define $\Delta_L^{k+1}$ as follows: for each $i$ and each finite subset $S\subset\Delta_L^{k}$ that does not meet $A_i$ we add a new point $x$ to $A_i$ such that $s\to x$ for all $s\in S$ and $a\leftarrow x$ for all $a\in\Delta_L^k\smallsetminus(S\cup A_i)$. We then make $\Delta_L^{k+1}\smallsetminus\Delta_L^k$ complete $n$-partite by proceeding as in the case when $n=2$ within $A_0\cup A_1$, and additionally adding edges from $A_i$ to $A_j$ for $i<j$ when $2\leq j$. This guarantees that there is a unique extension to $\phi_L^{k+1}\supset \phi_L^k$ to $\Delta_L^{k+1}$ that interchanges $A_0$ and $A_1$ and preserves $A_i$ for $i\geq2$. The rest of the proof is the same as above.
\end{proof}

\bibliographystyle{alpha}
\begin{singlespace}
  \bibliography{summer,conjugacy}
\end{singlespace}

\end{document}